\documentclass[12pt]{amsart}
\usepackage{hyperref} 
\usepackage{amsmath, amsthm, amssymb}
\usepackage{hyperref} 
\usepackage{enumerate}
\usepackage{verbatim}
\usepackage{esint}
\usepackage[T1]{fontenc}

\usepackage{tikz,amsthm,amsmath,amstext,amssymb,amscd,epsfig,euscript, mathrsfs,
 dsfont,pspicture,
multicol,graphpap,graphics,graphicx,times,enumerate,subfig,
sidecap,
wrapfig,color,pict2e}
\usepackage{setspace}

% 1 inch margins
\addtolength{\oddsidemargin}{-0.875in}
\addtolength{\evensidemargin}{-0.875in}
\addtolength{\textwidth}{1.75in}

\numberwithin{equation}{section}

%\documentclass[11pt,a4paper]{article}
%\usepackage{xcolor}
%
%\RequirePackage{dsfont}
%\RequirePackage{amsmath}
%\RequirePackage{amssymb}
%\RequirePackage{amsthm}
%\RequirePackage{enumitem}
%\RequirePackage{mathtools}
%
%%% Geometry of the Page
%\addtolength{\evensidemargin}{-.8in}
%\addtolength{\oddsidemargin}{-.8in}
%\addtolength{\textwidth}{1.5in}
%\addtolength{\textheight}{1.5in}
%\addtolength{\topmargin}{-1in}
%\setlength\parindent{24pt}

%% Theorem Environments
\newtheorem{theorem}{Theorem}[section]
\newtheorem{lemma}[theorem]{Lemma}
\newtheorem{corollary}[theorem]{Corollary}

\numberwithin{equation}{section}

%% Script Letters
\newcommand{\bX}{{\mathbb{X}}}
\newcommand{\bR}{{\mathbb{R}}}
\newcommand{\sF}{{\mathcal{F}}}
\newcommand{\sH}{\mathcal{H}}
\newcommand{\sQ}{\mathcal{Q}}

%% Text Commands
\newcommand{\bp}{\mathop\mathrm{BP}}
\newcommand{\LG}{\mathop\mathrm{LG}}

\newcommand{\diam}{\mathop\mathrm{diam}}
\newcommand{\rootcube}{\mathop\mathrm{root}}
\newcommand{\dist}{\mathop\mathrm{dist}}

\title{Iterating the Big--Pieces operator and larger sets
}
\author{Jared Krandel}
\address{Department of Mathematics\\ Stony Brook University\\ Stony Brook, NY 11794-3651}
\email{Jared.Krandel@stonybrook.edu}

\author{Raanan Schul}
\address{Department of Mathematics\\ Stony Brook University\\ Stony Brook, NY 11794-3651}
\email{schul@math.sunysb.edu}

\date{July 2021}

\thanks{R.~ Schul was partially supported by the National Science Foundation under Grants No. DMS-1763973.}
\subjclass[2010]{28A75, 30L99.}

\begin{document}

\begin{abstract}
We show that if an Ahlfors-David regular set $E$ of dimension $k$ has Big Pieces of Big Pieces of Lipschitz Graphs (denoted usually by $\bp(\bp(\LG))$), then 
$E\subset \tilde{E}$ where $\tilde{E}$ is Ahlfors-David regular of dimension $k$ and has Big Pieces of Lipschitz Graphs (denoted usually by $\bp(\LG))$.  Our results are quantitative and, in fact, are proven in the setting of a metric space for 
 any family of Ahlfors-David regular sets $\sF$ replacing $\LG$.
A simple corollary is the stability of the BP operator after 2 iterations. 
This was previously only known in the Euclidean setting for the case $\sF= \LG$ with substantially more complicated proofs.
\end{abstract}

\maketitle

%\end{document}

\section{Introduction}
A closed set $E$ (with more than one point) in a metric space $\bX$ is said to be $k$-Ahlfors-David regular if there is a constant $C>1$ such that 
for all $r\in (0,\diam(E))$ and $x\in E$ we have
$C^{-1}r^k<\sH^k(E\cap B(x,r))<Cr^k$.
For some given class $\sF$ of $k$-Ahlfors-David regular subsets (of a metric space $\bX$),
%$\mathbb{R}^n$, 
we define $\bp(\sF)$ as follows: $F\in \bp(\sF)$ if $F$ is a $k$-Ahlfors-David regular set for which there exists a constant $\theta > 0$ such that for any $x\in F$ and $R>0$, there is a set $G_{x,R}\in \sF$ such that
\begin{equation*}
    \sH^k(B(x,R)\cap F \cap G_{x,R}) \geq \theta \sH^k(B(x,R)\cap F).
\end{equation*}
Conditions involving $\bp(\sF)$ for various classes of sets $\sF$ play an important role in the theory of uniformly rectifiable sets in $\bR^n$ developed By David and Semmes (see e.g. \cite{David-wavelets}, \cite{DS-analysis-of-and-on}).
While the original motivation was the study of  singular integral operators, the study of such conditions has taken on a life of its own. 

In the context of singular integrals, the condition $\bp(\sF)$ is important because it allows the uniform boundedness of a family of SIOs given by convolution with `nice' kernels over sets in $\sF$ to be transported to sets in $\bp(\sF)$. In particular, one can define successively weaker conditions $\bp^j(\sF)$ for all $j > 0$ which all imply boundedness given that the SIOs are bounded on $\sF$; the initial case David and Semmes considered \cite{DS-sing-int-and-rect-sets} used Lipschitz graphs as the base class, i.e., $E\in\bp^j(\LG)$. This raised a natural question: how do the collections 
$\bp^j(\LG)$ behave as $j$ grows?
It turned out that for $j\geq 2$ the collections $\bp^j(\LG)$ are all the same and {their elements} are called {\it Uniformly Rectifiable sets}. We refer the reader to  \cite{DS-analysis-of-and-on}, \cite{DS-sing-int-and-rect-sets}, specifically to Proposition 2.2 on page 97 of  \cite{DS-sing-int-and-rect-sets}, and Theorem 2.29 on page 336 of \cite{DS-sing-int-and-rect-sets}.  
For $n\in[k,2d+1)$ one also needs \cite{JARS-hardsard} to show that BPBI implies BP(BP(LG)), but this is not where most of the work goes --  
the proofs by David and Semmes of that stability (for $j\geq 2$) are quite sophisticated and rely on a Euclidean ambient space.

There has recently been interest in other families $\sF$, in particular for the purpose of studying {\it Parabolic Uniform Rectifiability}.
See e.g. the work in  \cite{bortz2020coronizations}, where the questions about Uniform Rectifiability in the metric setting are discussed for this purpose.
In fact, we refer to \cite{bortz2020coronizations} for a great introduction on contemporary applications of the idea of Big Pieces.
%https://arxiv.org/pdf/2008.11544.pdf

An immediate corollary of the main result contained in this essay (Theorem \ref{t:thm-1})  is that  stabilization of the operator $\bp^j$ occurs in the setting of metric spaces for $j\geq 2$ as well.  Our proof is both simple and direct.

\section{Acknowledgements}
The second author would like to thank Jonas Azzam for pointing out earlier work of David and Semmes.

\section{A Theorem}
\begin{theorem}\label{t:thm-1}
Let $\sF$ be a class of 
(closed)
$k-$Ahlfors-David regular sets in a metric space $\bX$.
Let $E\subseteq \bX$ be a $k$-Ahlfors-David regular set with $E\in\bp(\bp(\sF))$. 
Then there exists a set $F\subset \bX$ such that
%\begin{enumerate}[label=(\roman*)]
\begin{enumerate}[(i)]
    \item {$E \subseteq F$,}
    \item $F$ is $k$-Ahlfors-David regular.
    \item $F\in \bp(\sF)$.
\end{enumerate}
The constants in the conclusion are quantitative with dependance on the constants in the assumptions.
\end{theorem}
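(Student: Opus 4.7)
The plan is to construct $F$ as $E$ together with a carefully chosen family of the $\bp(\sF)$-witnesses furnished by the hypothesis, and then to use those witnesses as intermediate objects in verifying (iii). Concretely, the assumption $E\in\bp(\bp(\sF))$ yields, for each $(x,R)\in E\times(0,\infty)$, a set $G_{x,R}\in\bp(\sF)$ with
\[
\sH^k\bigl(E\cap G_{x,R}\cap B(x,R)\bigr)\geq \theta\,\sH^k\bigl(E\cap B(x,R)\bigr),
\]
where the $G_{x,R}$ have uniform Ahlfors-regularity constants and uniform $\bp(\sF)$-constants. Each such $G_{x,R}$ therefore carries its own $\sF$-witnesses $H$ at every scale and location, and the idea is to include enough $G_{x,R}$'s inside $F$ so that at every $(y,r)\in F\times(0,\infty)$ a nearby $G_{x,R}$ is available to serve as a relay between $F$ and some $H\in\sF$.

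First I would fix a Christ-type dyadic system $\sQ$ on $E$ with cubes $Q$ of center $x_Q$ and side length $\ell(Q)$, and set $G_Q:=G_{x_Q,\ell(Q)}$. Then I would define
\[
F:=E\ \cup\ \bigcup_{Q\in\sQ^{*}}\bigl(G_Q\cap B(x_Q,\,C\ell(Q))\bigr),
\]
where $\sQ^{*}\subseteq\sQ$ is a sub-family chosen (if necessary via a stopping-time/Carleson sparsification) so that at each point and each scale only boundedly many truncated witnesses contribute and the packing $\sum_{Q\in\sQ^{*},\,Q\subseteq R}\ell(Q)^k\lesssim \ell(R)^k$ holds. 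Property (i) is immediate. For (ii), the lower ADR bound for $F$ comes for free from $E\subseteq F$ and from the ADR of the truncated $G_Q$'s at points lying off $E$; the upper ADR bound reduces to the Carleson/bounded-overlap estimate for $\sQ^{*}$ together with the ADR of each $G_Q$.

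For (iii), given $(y,r)\in F\times(0,\infty)$, I would select a cube $Q\in\sQ^{*}$ with $\ell(Q)\sim r$ and $x_Q$ close to $y$ such that $G_Q\cap B(y,Cr)\subseteq F$ and $\sH^k(G_Q\cap B(y,Cr))\gtrsim r^k$. Choosing a point $y'\in G_Q$ close to $y$ (which exists because $E\cap G_Q\cap B(x_Q,\ell(Q))$ has positive density from the big-piece hypothesis) and applying the $\bp(\sF)$-property of $G_Q$ at $(y',r)$ produces $H\in\sF$ with $\sH^k(H\cap G_Q\cap B(y,Cr))\gtrsim r^k$. Since that portion of $G_Q$ sits inside $F$, we obtain $\sH^k(H\cap F\cap B(y,Cr))\gtrsim r^k\sim\sH^k(F\cap B(y,r))$, which is (iii); all constants propagate quantitatively from those in the hypothesis.

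\paragraph{Main obstacle.} The real difficulty lies in striking the balance between sparsity and density of the chosen witness collection $\sQ^{*}$: including $G_Q$ for every cube at every scale immediately overwhelms the upper ADR bound through scale-accumulation, while including only one $G$ at a single large scale leaves points of $E$ that happen to lie in the complement of that single witness without a relay at small scales. The Carleson/stopping-time selection is the device that reconciles the two requirements, and the critical technical lemma is that for every $(y,r)\in F$ one can always find an included $G_Q$ whose intersection with $B(y,r)$ is a definite fraction of $\sH^k(F\cap B(y,r))$ and whose scale is comparable to $r$.
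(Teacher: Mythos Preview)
Your plan is essentially the paper's approach: the authors realize your unspecified ``stopping-time/Carleson sparsification'' by iteratively taking a Whitney-type decomposition of the uncovered part $E\setminus F_m$ into maximal dyadic cubes $Q$ with $\dist(Q,F_m)>\diam Q$ and attaching a witness $F_Q\in\bp(\sF)$ to each, which simultaneously yields the Carleson packing (via the geometric decay $\sH^k(E\setminus F_m)\le(1-c_0)^m\sH^k(E)$) and the nested chain $Q_0\supseteq Q_1\supseteq\cdots$ through which one locates a relay $F_{Q_j}$ at every scale in (iii). Two small adjustments to keep in mind: your ``critical technical lemma'' should allow $\ell(Q)\gtrsim r$ rather than $\ell(Q)\sim r$ (when $y\in G_{Q'}$ with $r\ll\ell(Q')$ the relay is $G_{Q'}$ itself, of much larger scale), and the case $\diam E=\infty$ is treated in the paper by a separate reduction to the bounded case.
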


\begin{corollary}\label{c:cor-1}
Let $\sF$ be a class of 
closed
$k-$Ahlfors-David regular sets in a metric space $\bX$.
For any $j>2$, and any constants $\theta_1,...,\theta_j>0$ defining $\bp^j(\sF)$, there are $\theta_1', \theta_2'>0$ such that 
the family   $\bp(\bp(\sF))$ defined using $\theta_1', \theta_2'$ is equal to  $\bp^j(\sF)$ defined using  $\theta_1,...,\theta_j$
\end{corollary}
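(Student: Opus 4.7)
The plan is to split the asserted equality into two inclusions and treat them separately, using Theorem \ref{t:thm-1} for the nontrivial direction.

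First I would dispatch the easy inclusion $\bp^2(\sF) \subseteq \bp^j(\sF)$. The key observation is that every $\sF$-set is trivially in $\bp(\sF)$ (witnessed by itself with big-pieces constant $\theta = 1$), so $\sF \subseteq \bp(\sF)$; combined with the obvious monotonicity of $\bp$ in its argument (if $\sF \subseteq \sF'$ then $\bp(\sF) \subseteq \bp(\sF')$ with the same constant), this yields $\bp^m(\sF) \subseteq \bp^{m+1}(\sF)$ for every $m$, and in particular $\bp^2(\sF) \subseteq \bp^j(\sF)$. Choosing $\theta_1' \leq \theta_1$ and $\theta_2' \leq \theta_2$ then makes this inclusion hold with the prescribed constants.

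For the nontrivial inclusion $\bp^j(\sF) \subseteq \bp^2(\sF)$, I would induct on $j \geq 2$, with $j = 2$ trivial. Assuming the inclusion at level $j - 1$, let $E \in \bp^j(\sF) = \bp(\bp^{j-1}(\sF))$; then for each $(x, R)$ there is $G_{x, R} \in \bp^{j-1}(\sF)$ with $\sH^k(B(x, R) \cap E \cap G_{x, R}) \geq \theta_1 \sH^k(B(x, R) \cap E)$. The inductive hypothesis places $G_{x, R}$ in $\bp^2(\sF) = \bp(\bp(\sF))$ with controlled constants, so Theorem \ref{t:thm-1} applied to $G_{x, R}$ produces a $k$-Ahlfors-David regular set $\tilde G_{x, R} \supseteq G_{x, R}$ with $\tilde G_{x, R} \in \bp(\sF)$. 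The big-pieces inequality then passes to $\tilde G_{x, R}$ since $G_{x, R} \subseteq \tilde G_{x, R}$:
\begin{equation*}
    \sH^k(B(x, R) \cap E \cap \tilde G_{x, R}) \geq \sH^k(B(x, R) \cap E \cap G_{x, R}) \geq \theta_1 \sH^k(B(x, R) \cap E),
\end{equation*}
so $E \in \bp(\bp(\sF)) = \bp^2(\sF)$.

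The main step is the quantitative tracking of constants: each application of Theorem \ref{t:thm-1} introduces a controlled loss depending on the theorem's quantitative dependence, and composing these losses through the $j - 2$ inductive invocations produces the final $\theta_1', \theta_2'$ in terms of $\theta_1, \ldots, \theta_j$ together with the dependence recorded in Theorem \ref{t:thm-1}. Once this bookkeeping is set up, the corollary follows as a clean iterated application of Theorem \ref{t:thm-1}, which is precisely the content the theorem was stated in the right form to provide.
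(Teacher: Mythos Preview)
Your proposal is correct and follows essentially the same approach as the paper: use Theorem~\ref{t:thm-1} to enlarge the big piece $G_{x,R}\in\bp^2(\sF)$ to a containing set $\tilde G_{x,R}\in\bp(\sF)$, thereby collapsing $\bp^j$ to $\bp^2$ by induction on $j$. Your write-up is somewhat more explicit than the paper's (you spell out the trivial inclusion $\bp^2(\sF)\subseteq\bp^j(\sF)$ and the constant bookkeeping, which the paper omits), but the substance is identical.
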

\begin{proof}[Proof of Corollary \ref{c:cor-1}]
Let
$E\in \bp^3(\sF)$.
Then for any $x\in E$ and $R<\diam(E)$ we have a set 
$E'_{x,R}\in \bp^2(\sF)$ such that 
$\sH^k(B(x,R)\cap E)\lesssim \sH^k(B(x,R)\cap E \cap E'_{x,R})$.
By 
Theorem  \ref{t:thm-1}, there is a set 
$F_{x,R}\in \bp(\sF)$ so that $F_{x,R}\supset E'_{x,R}$.
Clearly  
$\sH^k(B(x,R)\cap E)\lesssim \sH^k(B(x,R)\cap E \cap F_{x,R})$.
We have shown $E\in \bp^2(\sF)$.
This gives for any $j\geq 3$ that 
$\bp^j(\sF)=\bp^{j-1}(\sF)$, and so we are done by induction.
\end{proof}

\begin{proof}[Proof of Theorem \ref{t:thm-1} for the case $\diam E < \infty$]
%\RS{add another case: infinite diameter E}.
%\JK{First, suppose $\diam E < \infty$}. 
We suppose that  $\diam E < \infty$.
In order to construct the set $F$, we first fix a dyadic cube decomposition of $E$ denoted by $\Delta = \Delta(E)$ with root cube $\rootcube(\Delta) = Q_0 = E$. By construction, for each cube $Q\in\Delta$ there exists a point $c(Q)\in Q$ which we call the center of $Q$ satisfying 
\begin{equation}
    \dist(B(c(Q),c_1 \diam Q),\ E\setminus Q) \geq c_2 \diam Q.
\end{equation}
for some constants $c_1,c_2 > 0$
(see e.g. \cite{Christ-T(b)}).
From now on, define $B_{c(Q)} = B(c(Q), c_1 \diam(Q))$. We construct the set $F$ desired in the theorem inductively. At stage 0, use the fact that $E\in\bp(\bp(\sF))$ to find a closed set $F_{Q_0}\in \bp(\sF)$ such that $F_{Q_0}\subseteq B_{c(Q_0)}$ and
\begin{equation*}
    \sH^k(B_{c(Q_0)} \cap E \cap F_{Q_0}) \gtrsim_{\theta_1} \sH^k(B_{c(Q_0)}\cap E) \gtrsim_{c_1,c_2} \diam(Q_0)^k.
\end{equation*}
We define
\begin{align*}
   F_0 &= F_{Q_0}.
\end{align*} 
We continue the construction by defining a dyadic decomposition $\sQ_1$ of the set $E\setminus F_{Q_0}$. Indeed, since $F_{Q_0}$ is closed, $E\setminus F_{Q_0}$ is relatively open in $E$ and for any $x\in E\setminus F_{Q_0}$, there exists some dyadic cube $Q\ni x$ of maximal diameter such that $\dist(Q, F_{Q_0}) > \diam Q$. We call the disjoint family of all such maximal cubes $\sQ_1$, so that we have
\begin{equation*}
    E\setminus F_{Q_0} = \bigcup_{Q\in\sQ_1} Q.
\end{equation*}
We now give stage 1 of the construction of $F$. For each $Q\in \sQ_1$, Again find closed a set $F_Q\in \bp(\sF)$ such that
\begin{equation}\label{e:2.2}
    \sH^k(B_{c(Q)}\cap E\cap F_Q) \gtrsim_{\theta_1,c_1,c_2} \sH^k(Q).
\end{equation}
We define
\begin{align*}
    F_1 &= F_{Q_0} \cup \bigcup_{Q\in\sQ_1} F_{Q}.
\end{align*}
Continue the construction inductively. Given the construction completed up to stage $m$, we define the set $\sQ_{m+1}$ to be the collection of dyadic cubes with maximal diameter contained in $E \setminus F_m$ such that $Q\in \sQ_{m+1}$ satisfies
\begin{equation}\label{e:2.3}
    \dist(Q, F_m) > \diam(Q). 
\end{equation}
$\sQ_{m+1}$ is a disjoint decomposition of $E\setminus F_m$ so that
\begin{equation}\label{e:2.4}
    E\setminus F_m = \bigcup_{Q\in\sQ_{m+1}} Q.
\end{equation}
Given such a $Q$, let $F_Q\in \bp(\sF)$ with $F_Q\subseteq B_{c(Q)}$ be such that \eqref{e:2.2} holds and define
\begin{equation}\label{e:2.5}
    F_{m+1} = F_m \cup \bigcup_{Q\in\sQ_{m+1}}F_Q = F_{Q_0} \cup \bigcup_{Q\in\sQ_1}F_Q \cup \cdots \cup \bigcup_{Q\in\sQ_{m+1}}F_Q.
\end{equation}
{ Finally, set 
\begin{equation}\label{e:2.6}
    F = \overline{\bigcup_{m=0}^\infty F_m}
\end{equation}}
and define $\sQ = \cup_m \sQ_m$. Now that we have constructed the set $F$, we note two of its simple properties. 
First, given any $Q\neq Q^\prime \in \sQ$, equality \eqref{e:2.3} implies
\begin{equation}\label{e:2.7}
    \dist(F_Q,F_{Q^\prime}) > \min\{\diam(Q),\diam(Q^\prime)\}.
\end{equation}
{ Second,
\begin{equation*}
    \lim F \subseteq E \cup \bigcup_{m=0}^\infty F_m
\end{equation*}
where $\lim F$ denotes the set of limit points of $F$. Indeed, suppose $x\in \lim F$ with $x_j\rightarrow x,\ x_j\in F_{Q_j}$. If the set $\{Q_j\}_j$ is finite, then \eqref{e:2.7} implies the sequence $F_{Q_j}$ is eventually constant, say $F_{Q_j}\rightarrow F_{Q_i}$ meaning $x\in F_{Q_i}$ since $F_{Q_i}$ is closed. If instead $\{Q_j\}_{j}$ is infinite, then consider a subsequence $x_{k_j}\rightarrow x$ such that $Q_{k_j} \not= Q_{k_i}$ for any $i,j$. The fact that $x_{k_j}$ converges combined with \eqref{e:2.7} then implies $\diam Q_j  \rightarrow 0$. Since $\dist(F_{Q_j}, E) \leq \diam Q_j$, we have $\dist(x,E) = 0$ which implies $x\in E$. 
(In particular, we will soon see that this implies $\sH^k(\lim F \setminus \cup_m F_m) = 0$.)
}

{\bf We begin with proving claim (i).} Notice that for any $N\in\mathbb{N}$,
\begin{equation*}
    \sH^k(E\setminus F) \leq \sH^k\left(E\setminus \bigcup_{m=0}^\infty F_m\right) \leq \sH^k(E\setminus F_N)
\end{equation*}
because the sets $F_m$ are increasing. Letting $0 < c_0 < 1$ be the constant implicit in inequality \eqref{e:2.2}, we can write
\begin{align*}
    \sH^k(E\setminus F_N) & \stackrel{\eqref{e:2.5}}{=} \sH^k\left( E\setminus F_{N-1} \setminus \bigcup_{Q\in\sQ_N}F_Q \right) 
       \stackrel{\eqref{e:2.4}}{=} \sH^k\left( \bigcup_{Q\in\sQ_N}Q \setminus \bigcup_{Q\in\sQ_N} F_Q\right)\\
    &= \sum_{Q\in\sQ_N} \sH^k\left( Q\setminus F_Q\right) \leq (1-c_0)\sum_{Q\in\sQ_N}\sH^k(Q) = (1-c_0)\sH^k(E\setminus F_{N-1})
\end{align*}
where we used the fact that $F_Q\cap F_{Q^\prime} = \varnothing$ for $Q,Q^\prime\in\sQ_N$. Since this holds for any $N$, we can iterate this inequality to get
\begin{equation*}
    \sH^k(E\setminus F_N) \leq (1-c_0)^N\sH^k(E)
\end{equation*}
from which we conclude $\sH^k(E\setminus F) = 0$.
To finish the proof of (i), let $x\in E$ be arbitrary. Since $E$ is Ahlfors-David regular, for any $R>0$, $\sH^k(B(x,R)) > 0$ so that $F\cap B(x,r)\not=\varnothing.$ This means $x$ is a limit point of $F$, implying $x\in F$ because $F$ is closed. 

{\bf We now prove (ii).} Fix any point $x\in F$ and some $R < \diam F$. If $x\in F\setminus \cup_m F_m$, then we can find a particular $F_{Q}$ with $\dist(x, F_{Q}) < \frac{R}{100}$ and $\dist(x,F_{Q}) = \dist(x,z)$ for $z\in F_{Q}$. Then, we have $B(z,R/2)\subseteq B(x,R) \subseteq B(z,2R)$, and substitute the first ball or final ball for $B(x,R)$ in the proofs of lower and upper regularity respectively. Hence, we can assume $x\in\cup_m F_m$. By definition, there exists $Q_m\in \sQ_m$ such that $x\in F_{Q_m}$ for some $m\in\mathbb{N}$. Write
\begin{equation}\label{e:2.8}
    \sH^k(B(x,R)\cap F) = \sum_{\substack{ F_Q\cap B(x,R) \not=\varnothing \\ \diam Q > 10R }} \sH^k(B(x,R)\cap F_Q) + \sum_{\substack{ F_Q\cap B(x,R)\not=\varnothing \\ \diam Q \leq 10R }} \sH^k(B(x,R)\cap F_Q).
\end{equation}
We will first show that $F$ is upper regular. Let $\sQ_I$ be the collection of cubes summed over in the first term of \eqref{e:2.8}. 
By \eqref{e:2.7}, we have that for any $Q,Q^\prime\in \sQ_I$, $\dist(F_Q,F_{Q^\prime}) > 10R$. 
This means 
{$\sQ_I$ has at most one element}. 
Given such a $Q$, choose $y\in B(x,R)\cap F_Q$ and write
\begin{equation*}
    \sH^k(B(x,R)\cap F_Q) \leq \sH^k(F_Q\cap B(y,2R)) \lesssim R^k
\end{equation*}
using the fact that $F_Q$ is itself $k$-Ahlfors-David regular. This proves the first sum in \eqref{e:2.8} has the appropriate upper bound. Let $\sQ_{II}$ be the collection of cubes summed over in the second term of \eqref{e:2.8}. Since $\diam(Q) < 10R$, any $Q\in\sQ_{II}$ satisfies $Q\subseteq B(x,20R)$. We first prove a lemma

\begin{lemma}\label{l:lem-1}
Let $Q\in\sQ$, and let $D(Q)$ be the descendants of $Q$ in $\sQ$. Then
\begin{equation*}
    \sH^k\left(\bigcup_{Q^\prime\in D(Q)}F_{Q^\prime}\right) = \sum_{Q^\prime\in D(Q)}\sH^k(F_{Q^\prime}) \lesssim_{\theta_1,c_1,c_2} \sH^k(Q).    
\end{equation*}
\end{lemma}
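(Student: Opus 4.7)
The equality is immediate: by \eqref{e:2.7} the sets $\{F_{Q'}\}_{Q'\in D(Q)}$ are pairwise disjoint closed (hence Borel) sets, so countable additivity of $\sH^k$ gives the first equality for free. The content of the lemma is the upper bound.

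My first move would be to dispose of each individual term $\sH^k(F_{Q'})$. Since $F_{Q'}\in\bp(\sF)$ is itself $k$-Ahlfors-David regular and $F_{Q'}\subseteq B_{c(Q')}$ with $\diam(B_{c(Q')})\lesssim\diam Q'$, Ahlfors-David regularity yields $\sH^k(F_{Q'})\lesssim\diam(Q')^k\lesssim\sH^k(Q')$, where the second comparison uses Ahlfors-David regularity of $E$. So it suffices to prove
$$\sum_{Q'\in D(Q)}\sH^k(Q')\lesssim \sH^k(Q).$$

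I would organize this sum by generations of $\sQ$. Say $Q\in\sQ_{m_0}$. The key structural observation is that any $Q'\in\sQ_{m_0+j}$ with $j\geq 1$ that meets $Q$ must be a dyadic subset of $Q$, and hence lies in $D(Q)$. Otherwise dyadic nesting would force $Q\subsetneq Q'$; but then $Q'\supseteq Q\cap F_Q$, which has positive $\sH^k$-measure by \eqref{e:2.2} together with the inclusion $B_{c(Q)}\cap E\subseteq Q$, contradicting $Q'\subseteq E\setminus F_{m_0}\subseteq E\setminus F_Q$. Consequently, for each $j\geq 1$, $\sQ_{m_0+j}\cap D(Q)$ is a disjoint decomposition of $Q\setminus F_{m_0+j-1}$.

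Now the same mass-decrement argument appearing in the proof of (i) applies locally inside $Q$. Because $\sH^k(Q\cap F_Q)\geq c_0\sH^k(Q)$ for some definite $c_0>0$, we have
$$\sum_{Q'\in\sQ_{m_0+1}\cap D(Q)}\sH^k(Q')=\sH^k(Q\setminus F_{m_0})\leq (1-c_0)\sH^k(Q),$$
and iterating cube-by-cube across subsequent generations yields $\sum_{Q'\in\sQ_{m_0+j}\cap D(Q)}\sH^k(Q')\leq (1-c_0)^j\sH^k(Q)$. Summing the resulting geometric series over $j\geq 1$ finishes the argument. The only real obstacle is the bookkeeping in verifying the tree-structure claim above — that the $\sQ$-descendants of $Q$ are genuinely dyadic descendants of $Q$ — after which the geometric decay is just a repetition of what was already carried out for (i).
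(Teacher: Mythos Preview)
Your proposal is correct and follows essentially the same route as the paper: bound $\sH^k(F_{Q'})\lesssim\sH^k(Q')$ via Ahlfors--David regularity, organize the sum by $\sQ$-generations, and use the mass-decrement inequality from the proof of (i) to obtain geometric decay $\sum_{Q'\in\sQ_{m_0+j}\cap D(Q)}\sH^k(Q')\leq(1-c_0)^{j}\sH^k(Q)$. The only cosmetic difference is that the paper writes ``Suppose for simple notation that $Q=Q_0$'' and thereby avoids your tree-structure paragraph (since every cube in $\sQ$ is trivially a dyadic descendant of $Q_0$); your verification of that claim for general $Q\in\sQ_{m_0}$ is exactly what justifies the paper's WLOG reduction.
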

\begin{proof}[Proof of Lemma \ref{l:lem-1}]
Suppose for simple notation that $Q = Q_0$. Using the regularity of each $F_Q$, we have
\begin{equation*}
    \sH^k\left(\bigcup_{Q\in D(Q_0)}F_{Q}\right) = \sum_{m=0}^\infty\sum_{Q\in D(Q_0)\cap \sQ_m}\sH^k(F_Q) \leq C\sum_{m=0}^\infty\sum_{Q\in D(Q_0)\cap \sQ_m}\sH^k(Q).
\end{equation*}
In analogy to \eqref{e:2.4}, $Q_0\setminus F_{m-1} = \bigcup_{Q\in D(Q_0)\cap \sQ_m}Q$ holds so that
\begin{align*}
    \sum_{Q\in D(Q_0)\cap \sQ_m}\sH^k(Q) &= \sH^k(Q_0\setminus F_{m-1}) = \sH^k\left(Q_0\setminus F_{m-2}\setminus\bigcup_{Q\in D(Q_0)\cap \sQ_{m-1}}F_Q\right) \\
    &= \sH^k\left(\bigcup_{Q\in D(Q_0)\cap \sQ_{m-1}} Q \setminus \bigcup_{Q\in D(Q_0)\cap \sQ_{m-1}}F_Q\right)\\
    &\leq \sum_{Q\in D(Q_0)\cap\sQ_{m-1}}\sH^k(Q\setminus F_Q)\\
    &\leq (1-c_0)\sum_{Q\in D(Q_0)\cap \sQ_{m-1}}\sH^k(Q)
\end{align*}
where $c_0$ was defined as the implicit constant in \eqref{e:2.2}. Iterating this inequality, we find
\begin{equation*}
    \sH^k\left(\bigcup_{Q\in D(Q_0)}F_{Q}\right) \leq C\sum_{m=0}^\infty (1-c_0)^m\sH^k(Q_0) \lesssim_{c_0} \sH^k(Q_0).
\end{equation*}
\end{proof}
Using this lemma, we can write
\begin{align*}
    \sum_{\substack{ F_Q\cap B(x,R)\not=\varnothing \\ \diam Q \leq 10R }} \sH^k(B(x,R)\cap F_Q) &\leq \sum_{\substack{Q\ \text{maximal} \\ Q\in\sQ_{II}}}\sum_{Q^\prime\in D(Q)}\sH^k(F_{Q^\prime}) \lesssim \sum_{\substack{Q\ \text{maximal} \\ Q\in\sQ_{II}}}\sH^k(Q) \\
    &\leq \sH^k(E\cap B(x,20R)) \lesssim R^k.
\end{align*}
This proves the desired bound for the second sum in \eqref{e:2.8}, proving the upper regularity of $F$. Now we show that $F$ is lower regular. If $R < 100\diam Q_m$, then the claim follows immediately from the lower regularity of $F_Q$. 
If $100\diam Q_m \leq R < \diam F$, then 
%\RS{WAS: since $F_Q\cap Q\not=\varnothing$, there exists $z\in Q$} 
{since $F_{Q_m}\cap Q_m\not=\varnothing$, there exists $z\in Q$ 
{(and thus, $z\in E$)}
} 
%\RS{NOW: since $B(x,100\diam Q_m)\cap E\not=\varnothing$, there exists $z\in Q$
%BUT, for this to hold i need the number 100 to depend on the constants for the dyadic family}
with $B(x,R) \supseteq B(z,R/2)$ and
\begin{equation*}
    \sH^k(B(x,R)\cap F) \geq \sH^k(B(z,R/2)\cap E) \gtrsim R^k
\end{equation*}
using the fact that $E\subseteq F$. This completes the proof of lower regularity, hence of (ii) as well. 

{\bf Finally, we prove (iii).} Fix $x\in F_{Q_m}$ and $R>0$ as in the proof of (ii). Fix a constant $\alpha > 10$ to be chosen later. If $R < \alpha \diam Q_m$, then since $F_{Q_m}\in\bp(\sF)$,  there exists $G_{x,R}\in\sF$ such that
\begin{equation}\label{e:2.9}
    \sH^k(B(x,R)\cap F_{Q_m} \cap G_{x,R}) \geq \theta_2\sH^k(B(x,R)\cap F_{Q_m}) \gtrsim_{C^\prime,\alpha} R^k \gtrsim_{C^{\prime\prime}} \sH^k(B(x,R)\cap F)
\end{equation}
where $C^\prime$ is the regularity constant for $F_{Q_m}$ and $C^{\prime\prime}$ is the regularity constant for $F$. 
Now, suppose that $\alpha \diam Q_m \leq R < \diam F$. Since $x\in F_{Q_m}$, there exists a chain of cubes 
%\JK{
 $Q_i\in\sQ_i,\ 0\leq i \leq m$ such that
\begin{equation*}
    Q_m \subseteq Q_{m-1} \subseteq \ldots \subseteq Q_1 \subseteq Q_0.
\end{equation*}
%(It's not true that $F_Q\subseteq Q$)}
Next, notice that for any choice of $\alpha > 10$, there exists a smallest cube $Q_j$ in the above chain such that $R < \alpha\diam Q_j$ since for all admissible $R$, $R < 10\diam Q_0$. Choose the constant $\alpha$ such that for any $y\in E\setminus F_i$, the cube $Q_{i+1}\ni y$ satisfies 
\begin{equation}\label{e:2.10}
    \dist(Q_{i+1},F_{Q_i}) < \frac{\alpha}{10}\diam Q_{i+1}.
\end{equation}
In general, $\alpha$ will depend on the constants used in the construction of $\Delta$, as it may be the case that all of the children of the cube $Q_i$ are small relative to $Q_i$ with bounds given in terms of these constants. With such an $\alpha$ chosen, let $Q_j$ be the smallest cube in the above chain for $x$ such that $R < \alpha \diam Q_j$. This means that $R \geq \alpha \diam Q_{j+1}$ so that \eqref{e:2.10} implies that there exists $y\in F_{Q_j}$ such that $B(y, R/2) \subseteq B(x,R)$. We can now repeat the argument of \eqref{e:2.9} with $Q_j$ in place of $Q_m$ to finish the proof.
This completes the proof 
of Theorem \ref{t:thm-1} for the case $\diam E < \infty$.
% \JK{in the case that $\diam E < \infty$}. 
\end{proof}

Before we turn to the case  $\diam E = \infty$,
we need the following lemma.
It says, roughly, that finite diameter subsets of $E$ can be made regular by extending them slightly. 
This extension also preserves the $\bp(\mathcal{F})$ property.

\begin{lemma}\label{l:lem-2}
Let $E\subseteq\mathbb{X}$ be a $k$-Ahlfors-David regular set and suppose that $G\subseteq E$ satisfies $\diam G = D < \infty$. For any $A \geq 1$, there exists a set $\tilde{G}\subseteq E$ such that
\begin{enumerate}[(i)]
    \item $G\subseteq\tilde{G}\subseteq B(G,\frac{3D}{A})\cap E = \{x\in E:\ d(x,G) < \frac{3D}{A}\}$,
    \item $\tilde{G}$ is $k$-Ahlfors-David regular with constant $C(k, C_E, A)$.
\end{enumerate}
Furthermore, if $E\in\bp(\mathcal{F})$ with constant $\theta_E$ for some class of $k$-Ahlfors-David regular sets, then $\tilde{G}\in\bp(\mathcal{F})$ with constant $\theta(k,\theta_E,A)$.
\end{lemma}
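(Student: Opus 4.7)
The plan is to construct $\tilde G$ as a thickening of $G$ by $E$-balls at the scale $\delta := D/A$, staying inside the prescribed neighborhood $B(G, 3D/A) \cap E$. Specifically, I would pick a maximal $\delta/10$-separated subset $\mathcal N$ of $\{x \in E : d(x,G) \leq 2\delta\}$ and put
\[
\tilde G := \bigcup_{y \in \mathcal N} \overline B(y, \delta/5) \cap E.
\]
The containments in (i) are immediate: every $x \in G$ lies within $\delta/10 < \delta/5$ of some $y \in \mathcal N$, giving $G \subseteq \tilde G$; and every $z \in \tilde G$ satisfies $d(z,G) \leq d(z,y) + d(y,G) \leq \delta/5 + 2\delta < 3\delta$, so $\tilde G \subseteq B(G, 3D/A) \cap E$. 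The ADR of $E$ bounds the net cardinality by a constant $\lesssim_{C_E} A^k$, so $\tilde G$ is a finite union of closed sets and hence closed.

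For (ii), upper ADR is immediate from $\tilde G \subseteq E$ and the ADR of $E$. The lower bound is the main work. At a point $z \in \tilde G$ and scale $r \in (0, \diam \tilde G)$, first note $\diam \tilde G \lesssim D$. For $r \geq \delta$, pick any $y \in \mathcal N$ with $z \in \overline B(y, \delta/5)$; the entire ball $\overline B(y, \delta/5) \cap E \subseteq \tilde G$ lies within $B(z, r + \delta/5)$ and contributes mass $\gtrsim_{C_E} \delta^k \gtrsim_{C_E} A^{-k} r^k$ to $\sH^k(\tilde G \cap B(z, r))$ (this is the regime that sets the $A^k$ loss in the ADR constant). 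For small scales $r \leq \delta/10$, if $z$ lies in the interior region $\{d(\cdot, G) \leq 2\delta\}$ covered by the net, there is a net point $y'$ with $d(z, y') \leq \delta/10$, so $B(z, r) \subseteq B(y', \delta/5)$, giving $B(z,r) \cap E \subseteq \tilde G \cap B(z,r)$ and lower ADR inherited from $E$.

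The main obstacle is the outer shell of $\tilde G$: points $z$ with $d(z,G) \in (2\delta, 11\delta/5]$ that lie only in the boundary region of $\tilde G$ and admit no net point within $\delta/10$. Handling these amounts to a ball-intersection estimate for ADR metric spaces, saying that for $z, y \in E$ with $d(z,y) \leq \delta/5$ and $r \leq \delta/5$ one has $\sH^k(B(z,r) \cap \overline B(y, \delta/5) \cap E) \gtrsim_{C_E} r^k$ (the metric-space analogue of the Euclidean half-ball intersection estimate). If this fails at the desired level of generality, a clean workaround is to tune the constants in the construction (taking a smaller ball radius and a suitably enlarged net region, and then truncating the union to stay inside $B(G, 3D/A)$) so that every $z \in \tilde G$ admits a net point well inside the bulk of some ball of the union.

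Finally, for the $\bp(\sF)$ transfer, given $x \in \tilde G$ and $R > 0$, I would pick $\tilde x \in G$ with $d(x, \tilde x) \leq 3\delta$ and apply $E \in \bp(\sF)$ at $(\tilde x, R + 3\delta)$ to produce $F \in \sF$ satisfying $\sH^k(B(\tilde x, R + 3\delta) \cap E \cap F) \geq \theta_E \sH^k(B(\tilde x, R + 3\delta) \cap E)$. Combining this with the two-sided comparability (up to $A$-dependent constants) between $\sH^k(B(x,R) \cap \tilde G)$ and $\sH^k(B(\tilde x, R + 3\delta) \cap E)$ provided by the ADR of both $\tilde G$ and $E$ shows that the same $F$ realizes the big-pieces condition for $\tilde G$ at $(x, R)$ with a constant $\theta = \theta(k, \theta_E, A)$.
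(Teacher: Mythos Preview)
Your single-scale thickening is a genuinely different route from the paper's, and the gap you yourself flag in the ``outer shell'' is real and cannot be closed by tuning constants. The ball-intersection estimate you invoke is false already for simple $2$-ADR sets in $\mathbb{R}^2$: take $E=[-1,1]^2\cup\bigl([3,5]\times[-1,1]\bigr)$, $y=(-1,0)$, $\rho=4$, $z=(3,0)$; then $\overline B(y,\rho)\cap E$ meets the right square only in the single point $z$, so $\sH^2\bigl(B(z,r)\cap\overline B(y,\rho)\cap E\bigr)=0$ for every $r<2$. The same phenomenon --- an isolated boundary contact --- persists after any fixed rescaling of the net spacing, the ball radius, or the net region, and truncating the union only removes mass. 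In other words, a one-scale union of $E$-balls always has a relative boundary in $E$ at which lower regularity can fail at all smaller scales. The paper's construction is designed precisely to defeat this: it iterates, at stage $n$ adding $E$-balls of radius $\sim 4^{-n}D/A$ around the current boundary layer, and the payoff is the key ``ball-inside'' property \eqref{e:smallball}: for every $x\in\tilde G$ and every admissible $R$ there is $y\in E$ with $B(y,cR)\cap E\subseteq B(x,R)\cap\tilde G$. This single statement simultaneously yields lower regularity and the $\bp(\sF)$ transfer; your one-step set does not enjoy it.

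Your $\bp(\sF)$ argument has an independent gap. Applying the big-pieces hypothesis for $E$ at $(\tilde x,R+3\delta)$ with $\tilde x\in G$ produces a set $F\in\sF$ whose big piece lies in $B(\tilde x,R+3\delta)\cap E$, but nothing forces any of that piece into the much smaller set $B(x,R)\cap\tilde G$; when $R\ll\delta$ the two balls are at incomparable scales and the piece may miss $B(x,R)$ entirely. The ``two-sided comparability'' you cite compares \emph{total} masses, not the location of the piece of $F$. The paper avoids this by applying the $\bp$ condition for $E$ inside the small ball $B(y,cR)$ coming from \eqref{e:smallball}, which already sits inside $B(x,R)\cap\tilde G$, so the resulting piece of $F$ lands where it is needed automatically.
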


\begin{proof}[Proof of Lemma \ref{l:lem-2}]
    We define an ``interior" of the set $G\subseteq E$ by
    \begin{equation*}
        I_A(G) = \left\{x\in G: d(x,E\setminus G) \geq \frac{D}{A}\right\}.
    \end{equation*}
    The corresponding ``boundary" is then
    \begin{equation*}
        G\setminus I_A(G) = \left\{x\in G: d(x,E\setminus G) < \frac{D}{A}\right\}
    \end{equation*}
    We will construct the set $\tilde{G}$ inductively. In the first stage, we will take a maximal net of appropriate size inside $G\setminus I_A(G)$ and add in balls around each net point to $G$. In the second step, we consider a smaller ``boundary" of this new set and repeat the above process with a finer net and smaller balls. If we continue this process indefinitely while adding balls of exponentially decreasing radii, we get the desired set by taking a closure. We now give this construction explicitly. 
    
    Let $G_0 = G$ and let $X_1$ be a maximal $\frac{D}{A}$-net for the set $G\setminus I_A(G) \subseteq E$. Define
    \begin{equation*}
        G_1 = G \cup \bigcup_{x\in X_1}B\left(x,\frac{2D}{A}\right)\cap E.
    \end{equation*}
    Given the set $G_n$, we define $X_{n+1}$ to be a maximal $4^{-n}\frac{D}{A}$-net for $G_n\setminus I_{4^n\cdot A}(G_n)$ and we let
    \begin{equation*}
        G_{n+1} = G_n \cup \bigcup_{x\in X_{n+1}} B\left(x, 4^{-n}\frac{2D}{A}\right)\cap E.
    \end{equation*}
    Finally, define
    \begin{equation*}
        \tilde{G} = \overline{\bigcup_{n=0}^\infty G_n}.
    \end{equation*}
    We will now show that $\tilde{G}$ satisfies the desired properties in the statement of the lemma. 
    
    { \bf We begin by proving (i)}. The maximal distance of a point $x\in \tilde{G}$ from $G$ is just given by the sum of the radii of the balls added in each step:
    \begin{equation*}
        d(x,G) \leq \frac{2D}{A}\sum_{n=0}^\infty4^{-n} = \frac{8}{3}\frac{D}{A} < \frac{3D}{A}.
    \end{equation*}
    
    %\JK{Proof of (ii) changed to first prove \eqref{e:smallball} and use that in BP proof}
    { \bf We now prove (ii)}. First, we observe that since $\tilde{G}\subseteq E$, we immediately have, for all $x\in\tilde{G}$, $R>0$,
    \begin{equation*}
        \sH^k(B(x,R)\cap \tilde{G}) \leq \sH^k(B(x,R)\cap E) \leq C_ER^k.
    \end{equation*}
    Hence, $\tilde{G}$ is upper $k$-Ahlfors-David regular with constant $C_E$. We will now show that $\tilde{G}$ is lower regular. In order to do so, we will first prove that there exists a constant $0 < c < 1$ dependent only on $A$ such that
    \begin{equation}\label{e:smallball}
        \forall x\in\tilde{G},\ \forall R,\  0<R<\diam\tilde{G},\ \exists y\in E\ \text{such that}\ B(y,cR)\cap E \subseteq B(x,R)\cap\tilde{G}.    
    \end{equation}
    We note that (ii) will follow from this since for any relevant pair $(x,R)$, we get the existence of $y\in E$ such that
    \begin{equation*}
        \sH^k(B(x,R)\cap\tilde{G}) \geq \sH^k(B(y,cR)\cap E) \geq \frac{c^kR^k}{C_E}
    \end{equation*}
    by the lower regularity of $E$. We now prove \eqref{e:smallball}. We begin by using the constant $c^\prime = \frac{1}{10\cdot 4^4 \cdot A}$ (we will only need to decrease it by a factor of $\frac{1}{2}$ at the end of the proof). Let $x\in\tilde{G}$ and assume $x\in G_m$ for some $m$. There exists some minimal $n$ such that $x\in I_{4^nA}(G_n)$ because $x\in G_m\setminus I_{4^mA}(G_m)$ implies $x\in I_{4^{m+1}A}(G_{m+1})$ by the triangle inequality. Indeed, let $t\in X_{m+1}$ be a nearest net point to $x$ and let $z\in E\setminus G_{m+1}$. We can calculate
    \begin{equation*}
        d(x,z) \geq d(t,z) - d(t,x) \geq 4^{-m}\frac{2D}{A} - 4^{-m}\frac{D}{A} = 4^{-m}\frac{D}{A} > 4^{-m-1}\frac{D}{A}.
    \end{equation*}
    Therefore, $d(x,E\setminus G_{m+1}) > 4^{-m-1}\frac{D}{A}$ so that $x\in I_{4^{m+1}A}(G_{m+1})$. Suppose first that $n \leq 4$. In this case, we will take $y = x$, and we must show the inclusion of the balls given in \eqref{e:smallball} for any admissible value of $R$. For $0 < R \leq 4^{-4}\frac{D}{A}$, note that $x\in I_{4^4A}(G_4)$ implies 
    \begin{equation}\label{e:basecase}
        d(x,E\setminus\tilde{G}) \geq d(x,E\setminus G_4) > \frac{D}{4^4A}
    \end{equation}
    so that $B(x,R)\cap\tilde{G} = B(x,R)\cap E$. If instead $4^{-4}\frac{D}{A} < R < \diam\tilde{G} < D + \frac{6D}{A} < 10D$, 
    \begin{equation*}
        c^\prime R = \frac{R}{10\cdot4^4\cdot A} < \frac{10D}{10\cdot4^4\cdot A} = 4^{-4}\frac{D}{A}.
    \end{equation*}
    Which shows that 
    \begin{equation*}
         B(x,c^\prime R)\cap E\subseteq B\left(x,4^{-4}\frac{D}{A}\right)\cap E = B\left(x,4^{-4}\frac{D}{A}\right)\cap\tilde{G}
    \end{equation*}
    by \eqref{e:basecase}.
    Now, suppose $n > 4$. This means $x\in I_{4^nA}(G_n)\setminus I_{4^{n-1}A}(G_{n-1})$.
    %\begin{equation}\label{e:hi-jared}
    %x\in B(y,4^{-n+1}\frac{2D}{A}) 
    %\end{equation}
    %for some $y\in X_n$.
    Hence, if $R < 4^{-n}\frac{D}{A}$, then we can take $y = x$ and note that $B(x,R)\cap\tilde{G} = B(x,R)\cap E$ in analogy to \eqref{e:basecase}. Now, suppose $4^{-m}\frac{D}{A} \leq R < 4^{-m+1}\frac{D}{A}$ for $0 \leq m \leq n-3$. There exist net points $x_p\in X_p$ for $m+3 \leq p \leq n$ such that
    \begin{align*}
        d(x,x_n) &\leq 4^{-n}\frac{2D}{A},\\
        d(x_{p+1},x_{p}) &\leq 4^{-p}\frac{2D}{A}.
    \end{align*}
    Hence, the triangle inequality implies
    \begin{equation}\label{e:dist}
        d(x,x_{m+3}) \leq \frac{2D}{A}\sum_{p=m+2}^n4^{-p} \leq \frac{2D}{A}\left(4^{-m-2}\cdot 2\right)= 4^{-m-1}\frac{D}{A}.
    \end{equation}
    In this case, we choose $y=x_{m+3}$. We calculate
    \begin{equation*}
        B(y,c^\prime R) = B\left(x_{m+3}, \frac{R}{10\cdot4^4\cdot A}\right) \subseteq B\left(x_{m+3}, 4^{-(m+3)}\frac{D}{10A^2}\right)\subseteq B\left(x,4^{-m}\frac{D}{A}  \right)\subseteq B(x,R)
    \end{equation*}
    using \eqref{e:dist} and the fact that $4^{-m}\frac{D}{A} \leq R < 4^{-m+1}\frac{D}{A}$. In the case when $\frac{D}{A} < R < 10D$, choose $y = x_3$, the nearest net point in $X_3$ and observe that 
    \begin{equation*}
         B(y,c^\prime R) = B\left(x_{3}, \frac{R}{10\cdot4^4\cdot A}\right) \subseteq B\left(x_{m+3}, 4^{-4}\frac{D}{A}\right)\subseteq B\left(x,\frac{D}{A} \right)\subseteq B(x,R)
    \end{equation*}
    again using \eqref{e:dist}. This proves \eqref{e:smallball} for all $x\in G_n$ for some $n$. If $x\not\in G_n$ for all $n$, then given any admissible $R>0$, there is a net point $t\in X_N$ for arbitrarily large $N$ such that $d(x,t) < \frac{R}{4}$ so that $B(t,\frac{R}{2}) \subseteq B(x,R)$ and, applying \eqref{e:smallball} to $B(t,\frac{R}{2})$, we get a point $y\in B(t,\frac{R}{2})$ such that $B(y,c^\prime \frac{R}{2})\subseteq B(t,\frac{R}{2})\subseteq B(x,R)$. Take $c = \frac{c^\prime}{2}$ and $B(y,cR)\subseteq B(x,R)$ so that $\eqref{e:smallball}$ holds with $c = \frac{1}{20\cdot4^4\cdot A}$.
    
    %\JK{Check this paragraph again, but most of the changes are in proof of (ii).}
    {\bf Proof that $\tilde{G}\in\bp(\mathcal{F})$.} This follows from \eqref{e:smallball}. Indeed, for any admissible pair $(x,R)$, choose $y$ as given by \eqref{e:smallball}. Applying the $\bp(\mathcal{F})$ condition for $E$ in the ball $B(y,cR)$ gives a set $H_{y,cR}\in\mathcal{F}$ such that
    \begin{equation*}
        \sH^k(B(x,R)\cap\tilde{G}\cap H_{y,cR}) \geq \sH^k(B(y,cR)\cap E \cap H_{y,cR}) \gtrsim_{A, \theta_E, k} R^k \gtrsim_C \sH^k(B(x,R)\cap\tilde{G}).
    \end{equation*}
This concludes the proof of the lemma.
\end{proof}

\begin{proof}[Proof of Theorem \ref{t:thm-1} for the case $\diam E = \infty$]
Fix $x_0\in E$. Let $A > 1$ and, for $n\geq0$, set
\begin{equation*}
    B_n = B(x_0, A^n)
\end{equation*}
where the constant $A$ is sufficiently large in terms of $C_E,k,$ and $\theta_E$, the $\bp$ constant. Let $E_n$ be the Ahlfors-David regular extension of the set $E\cap B_n$ with constant $A$ in Lemma \ref{l:lem-2} replaced with 100 so that $E_n\subseteq B(E\cap B_n, \frac{A^{n}}{4})$. $E_n$ satisfies the hypotheses of the finite diameter case of the theorem, so apply the theorem to get a regular set $F_n\in\bp(\mathcal{F})$ satisfying
\begin{equation*}
    E_n\subseteq F_n \subseteq B\left(x_0, \frac{5A^n}{4}\right).
\end{equation*}
In order to ensure bounded overlap, we then define $\tilde{F}_0 = F_0$ and $\tilde{F}_n$ for $n\geq 1$ to be the regular extension of $F_n\setminus\frac{1}{2}B_{n-1}$ given by the lemma with constant $A$ there replaced by $100A$ here. By construction, $\tilde{F}_n \subseteq B(F_n, \frac{A^{n-1}}{10})$ so that $\tilde{F}_n\cap\frac{1}{4}B_{n-1} = \varnothing$ and $\tilde{F}_n\subseteq B(x_0,2A^n)$. We also have $\tilde{F}_n\in\bp(\mathcal{F})$ with constant $\tilde{\theta}_F$ independent of $n$. We now define
\begin{equation*}
    F = \bigcup_{n=0}^\infty\tilde{F}_n
\end{equation*}
and claim that $F$ satisfies conditions (i)-(iii).

{\bf Proof of (i).} By definition, $E\cap (B_n\setminus \frac{1}{2}B_{n-1})\subseteq \tilde{F}_n$ so $E = \bigcup_{n=0}^\infty E\cap(B_n\setminus \frac{1}{2}B_{n-1}) \subseteq F$.

{\bf Proof of (ii).} For any $n$, $\tilde{F}_n$ is regular with some constant $\tilde{C}_F(A,C_E,k)$ independent of $n$. Lower regularity of $F$ with constant $\tilde{C}_F$ follows immediately, so we only need to show that $F$ is upper regular. Let $x\in\tilde{F}_n$ for some $n$. Observe that, for $j \geq 2$
\begin{equation*}
    d(x,\tilde{F}_{n+j}) \geq d\left(\tilde{F}_n,\frac{1}{4}B_{n+j-1}\right) \geq \frac{1}{4}A^{n+j-1} - 2A^n > A^{n+j-2}
\end{equation*}
provided we choose $A$ sufficiently large. Hence, if $R \leq A^{n-2}$, then $B(x,R)\cap \tilde{F}_j=\varnothing$ for $|n-j| \geq 2$. In this case,
\begin{align*}
    \sH^k(B(x,R)\cap F) = \sum_{j=-1}^1\sH^k(B(x,R)\cap\tilde{F}_{n+j}) \lesssim_{\tilde{C}_F} R^k
\end{align*}
independent of $n$ because $\tilde{F}_{n+j}$ is regular with constant independent of $n$. Now, suppose $A^{j} < R \leq A^{j+1}$ for $j \geq n-2$. We can write
\begin{align*}
    \sH^k(B(x,R)\cap F) &= \sum_{i=0}^{j+2}\sH^k(B(x,R)\cap\tilde{F}_i) \leq \sum_{i=0}^{j+2}\sH^k(\tilde{F}_i) \leq \tilde{C}_F\sum_{i=0}^{j+2}\diam(\tilde{F}_i)^k\\ 
    & \leq \tilde{C}_F\sum_{i=0}^{j+2} (4A)^{ik} \leq 2\tilde{C}_F(4A)^{(j+2)k} \leq (4A)^{2k+1}\tilde{C}_F(4R)^k.
\end{align*}
This proves upper regularity and finishes the proof of (ii). From now on, let $C_F = C_F(C_E,A,k)$ be the regularity constant for $F$.

{\bf Proof of (iii)} Let $x\in\tilde{F}_n$ and $R>0$. Suppose first that $0 < R \leq A^{n+2}$. Because $\tilde{F}_n\in\bp(\mathcal{F})$ by the lemma with constant $\tilde{\theta}_F(\theta_E,A,k)$ independent of $n$, we get the existence of a set $G_{x,R}\in\mathcal{F}$ such that 
\begin{align*}
    \sH^k(B(x,R)\cap F \cap G_{x,R}) &\geq \sH^k(B(x,R) \cap \tilde{F}_n\cap G_{x,R}) \gtrsim_{\tilde{\theta}_F, A} \sH^k(B(x,R)\cap \tilde{F}_n) \\
    &\gtrsim_{C_F} R^k \gtrsim_{C_F} \sH^k(B(x,R)\cap F).
\end{align*}
using the fact that $\tilde{F}_n$ is regular. Now, suppose $A^j < R \leq A^{j+1}$ for $j \geq n+2$. Because $x\in \tilde{F}_n,\ \frac{1}{4}A^{n-1} \leq d(x, x_0) \leq 2A^n$ so that
\begin{equation*}
    \tilde{F}_{j-2} \subseteq B\left(x_0, 2A^{j-2}\right) \subseteq B\left(x, 2A^{j-2} + 2A^n\right) \subseteq B(x, A^{j-1}) \subseteq B(x,R).
\end{equation*}
Using the above containment and the fact that $\tilde{F}_{j-2}\in\bp(\mathcal{F})$, there exists a set $G_{x,R}\in\mathcal{F}$ with both $G_{x,R}\subseteq B(x,R)$ and 
\begin{equation*}
    \sH^k(G_{x,R}\cap \tilde{F}_{j-2}) \gtrsim_{\tilde{\theta}_F} \diam(\tilde{F}_{j-2})^k \gtrsim A^{(j-2)k} \gtrsim_{A,k} R^k.
\end{equation*}
Hence, we have $\sH^k(B(x,R)\cap G_{x,R} \cap F) \gtrsim_{\tilde{\theta}_F,A,k} R^k \gtrsim_{C_F} \sH^k(B(x,R)\cap F)$ as desired.
\end{proof}

\bibliography{bib-file-2}{}
\bibliographystyle{plain}

\end{document}